\newcommand\blfootnote[1]{%
  \begingroup
  \renewcommand\thefootnote{}\footnote{#1}%
  \addtocounter{footnote}{-1}%
  \endgroup
  }
 \numberwithin{equation}{section}
\newtheorem{example}{Example}[section]
\newtheorem{theorem}{Theorem}[section]
\newtheorem{lemma}{Lemma}[section]
\newtheorem{remark}[example]{Remark}
\newtheorem{definition}[example]{Definition}
\definecolor{vine}{rgb}{0.7,0.1,0.1}
\newcommand{\R}{{\mathbb R}}
\newcommand{\N}{{\mathbb N}}
\newcommand{\Z}{{\mathbb Z}}
\newcommand{\C}{{\mathbb C}}
\newcommand{\eps}{\epsilon}
\renewcommand{\d}{{ \rm d}}
\newcommand{\ra}{\right\rangle}
\newcommand{\la}{\left\langle}
\newcommand{\no}[2]{ \left\| #1 \right\|_{#2} }
\renewcommand{\div}{{\operatorname{div}}}
\newcommand{\To}{{\mathbb{T}^2}}
\newcommand{\weak}{\rightharpoonup}
\newcommand{\weaks}{\rightharpoonup^*}
\newcommand{\Sp}{{\mathbb{S}}}
\newcommand{\Ld}{{\dot{L}}}
\newcommand{\Wd}{{\dot{W}}}
\newcommand{\loc}{{\operatorname{loc}}}
\author{Joshua Kortum\footnote{Institute of Mathematics, University of W\"urzburg, Emil-Fischer-Str.\ 40, 97074 W\"urzburg, Germany (joshua.kortum@mathematik.uni-wuerzburg.de)}}
\title{Concentration-cancellation in the Ericksen-Leslie model}
\begin{document}
\maketitle

\begin{abstract}
We establish the subconvergence of weak solutions to the Ginzburg-Landau approximation to global-in-time weak 
solutions of the Ericksen-Leslie model for nematic liquid crystals on the torus $\To$. The key argument is a variation  of concentration-cancellation methods originally introduced by DiPerna and Majda  to investigate the weak stability of solutions to the (steady-state) Euler equations. 
\end{abstract}


\blfootnote{Key Words: liquid crystals, Ericksen-Leslie model, harmonic maps, Ginzburg-Landau approximation, singular limit, concentration-cancellation\\
 AMS-Classification:  Primary 35Q35, 35K55; secondary 76A15, 58E20}

\section{Introduction}

The Ericksen-Leslie model describes the motion of  nematic liquid crystal flows \cite{ericksen,leslie}. The nematic phase of liquid crystals can be thought of as an intermediate state of isotropic flow and a solid crystalline phase where the rod-like molecules do not act freely  but tend to align in a certain direction. In order to depict this behaviour, two quantities are used to model the liquid crystal,  the velocity $v$ and the unitary director field $d$ which represents  the orientation of the molecules in space. 

In this article, we study the variant of the Ericksen-Leslie model proposed  in \cite{lin1989}, which  reads
\begin{align} \label{ericksen-leslie}
\begin{cases} 
	 \partial_t v + (v \cdot \nabla ) v + \nabla p - \Delta v = - \div ( \nabla d \odot \nabla d), \\
	 \div \, v = 0, \\
	 \partial_t d + (v \cdot \nabla ) d =  \Delta d + |\nabla d|^2 d , \qquad |d|\equiv 1,
	 \end{cases}
\end{align}
(all  physical constants set  to one)                                                                                                                                                                                                                                                                                                                                                                                                                                                                                                                                                                                                                                                                                                                                                                                                                                                                                                                                                                                                                                                                                                                                                                                         on the space-time domain $ \To \times [0,T]$ for any given $T>0$. Here,  $p: \To \times [0,T] \to \R$ denotes the 
underlying pressure and serves as a Lagrange multiplier subject to the incompressibility  condition $\div \, v =0$. 
The system is supplemented by initial data $(v_0,d_0): \To \to \R^2 \times \Sp^2$,  $\div \, v_0 =0$. We refer to \cite{hieber} 
for modelling issues of the general Ericksen-Leslie equations as well as analytical aspects. The energetic variational approach is executed  in e.g. \cite{deanna,lin1995}.

From the mathematical point of view, \eqref{ericksen-leslie} preserves the major mathematical challenges of the full dynamic Ericksen-Leslie model. Indeed, even on two-dimensional domains, the  system \eqref{ericksen-leslie} might  form singularities in finite-time as shown by Huang et al. in \cite{lin2016}. Establishing existence or uniqueness of solutions is therefore a non-trivial problem mainly due to the harmonic map heat flow-like equation for the director field $d$. In order to construct solutions to \eqref{ericksen-leslie}, one would like to use an approximation scheme and pass to the limit with the help of  a-priori estimates or a more refined analysis. However, the associated energy law, see \eqref{energyLaw} below, does not provide strong enough bounds,  and the main problem consists of the limit passage on 
the right-hand side in the momentum equation. Lin, Lin and Wang \cite{lin} and Hong \cite{hong} first proved the existence of weak solutions to \eqref{ericksen-leslie} on a two-dimensional bounded domain or $\R^2$. Both relied on Struwe's  \cite{struwe} construction of partially regular solutions of the harmonic map heat flow in two dimensions.  To this end, Hong constructed local-in-time smooth solutions to \eqref{ericksen-leslie} via the Ginzburg-Landau approximation 
\begin{align} \label{ginzburg-landau}\displaystyle
\begin{cases} 
	 \partial_t v_\eps + (v_\eps \cdot \nabla ) v_\eps + \nabla p_\eps - \Delta v_\eps = - \div ( \nabla d_\eps \odot \nabla d_\eps), \\
	 \div \, v_\eps = 0, \\
	 \partial_t d_\eps + (v_\eps \cdot \nabla ) d_\eps =  \Delta d_\eps + \frac{1}{\eps^2}(1-|d_\eps|^2)d_\eps 
	 \end{cases}
\end{align}
with $\eps \to 0^+$. System \eqref{ginzburg-landau} depicts a well established approximation to \eqref{ericksen-leslie} (cf. \cite{hieber}) in order to circumvent the above mentioned difficulties.  Lin and Liu  \cite{lin1995} first showed  existence of weak solutions  as well as strong solutions to \eqref{ginzburg-landau}
on a bounded domain either in two dimensions or under a smallness condition on the initial data in dimension three for fixed $\eps>0$. The energy related to \eqref{ginzburg-landau} reads
$$E_\eps (d_\eps) = \frac12 \int_\To |v_\eps|^2 + |\nabla d_\eps|^2 + \frac{1}{2\eps^2}(1-|d_\eps|^2)^2,$$
where the second term  penalizes variations from the constraint $|d_\eps|\equiv 1$.
 As 
$\eps$ tends to zero, the director field is forced to attain values in the sphere, i.e.\ $|d_\eps| \to 1$. Thus one expects
convergence of $d_\eps$  to solutions of \eqref{ericksen-leslie}. Indeed, this fact is proven for strong solutions 
locally in time in \cite{feng,hong}. However, an extension of this strong convergence result to larger times is not possible due to blow-up of some solutions to \eqref{ericksen-leslie}.

In this work, we actually prove the subconvergence of weak solutions to \eqref{ginzburg-landau} to weak solutions
of \eqref{ericksen-leslie} globally in time. In \cite[p.\ 1108]{hieber} and \cite[p.\ 290]{ball2017}  this issue was highlighted as an open problem.
This limit passage is also of interest for numerical approximations \cite{walkington}, in the stochastic Ericksen-Leslie system \cite{debouard} or the flow of magnetoviscoelastic materials (see \cite{schloemerkemper2018}). The singular limit problem $\eps \to 0^+$ for the harmonic map heat flow into spheres and more general manifolds was first studied by Chen and Struwe in \cite{chen,chen2} (see also \cite{alouges} for the related Landau-Lifshitz equation). In the Ericksen-Leslie model, the difficulty is to pass to the limit in the stress tensor $-\div (\nabla d \odot \nabla d)$ as long as one is restricted to the energy estimate \eqref{energyLaw}. In general, $\nabla d_\eps \odot \nabla d_\eps \weaks \nabla d \odot \nabla d + \eta$ for a possibly non-vanishing matrix-valued measure $\eta$. We will not show $\eta =0$ but  use the idea of concentration-cancellation for Euler equations introduced by DiPerna and Majda in \cite{diperna} (see also \cite{majda2002}) to verify that the weak limit  $(v,d)$ fulfills \eqref{ericksen-leslie}. This procedure becomes possible since $\div (\nabla d \odot \nabla d)$ enjoys
the same structure as the convective term in the Euler equations
\begin{align*}
	\begin{cases} \partial_t v + \div(v \otimes v) + \nabla p =0,\\
	\div \, v =0. \end{cases}
\end{align*}

While this technique is successful here, we remark that it cannot be used to prove existence of weak solutions to the time-dependent Euler equations
in general. One main problem depicts the low regularity of $\partial_t v$. However, there exist cases where concentration-cancellation occurs,
see e.g.\ the result of Delort \cite{delort} for non-negative vorticities bounded in the space of measures (see also \cite{evans}), or if certain  assumptions on the time-derivative of $v$ \cite{schochet} or the size of the defect measure are made \cite{diperna,schochet}. In contrast to the delicate situation for the Euler equations, enough regularity of $\partial_t v$ in \eqref{ginzburg-landau} is available, see \eqref{v_tReg}, such that we can perfom the limit passage without further assumptions. Indeed, it turns out to be sufficient to stick to the initial idea of \cite{diperna}.

A crucial idea of the proof is to fix a time  $t \in [0,T]$ and then to carry out  a concentration-cancellation argument in the limit passage. In particular, we show that  $(\nabla d_\eps(t))_\eps$ may concentrate only in a finite number of points. This result is in correspondence with well-known results for approximated harmonic maps, cf.~\cite{lin1999,qing,wang}. From the smallness of the concentration set we conclude that the limit of \eqref{ginzburg-landau} satisfies \eqref{ericksen-leslie} in the weak sense. 
The method of fixing a time step  is inspired by \cite{lin2}, where Lin and Wang consider the three-dimensional liquid crystal flow in the special case of  solutions with values in the upper half-sphere $d(x,t) \in \Sp^2_+$. We carry out this
program on the space domain $\To$, which allows to use the Fourier expansion.

The structure of the paper is as follows:
In the second section, basic notation and the main results, Theorems 2.1 and 2.2, are stated. The third section deals with the proofs of these theorems. The proofs are divided into several steps: Section 3.1  is devoted to a-priori estimates, Sections 3.2 and 3.3 provide an $\varepsilon_0$-regularity statetment and an estimate on the concentration set in space and Section 3.4 concludes the proofs  with the limit passage explained above.

\section{Setting and results}

Defining $A \odot B:= A^\top B $, we investigate the initial  value problem
\begin{align} 
	& \partial_t v + (v \cdot \nabla ) v + \nabla p - \Delta v = - \div ( \nabla d \odot \nabla d), \label{equationNS}\\
	& \div \, v = 0, \\
	& \partial_t d + (v \cdot \nabla ) d =  \Delta d + |\nabla d|^2 d, \qquad |d|\equiv 1 \label{equationLLG}
\end{align}
on $\To \times [0,T]$ with $\To=(\R / 2\pi\Z)^2$ and $T>0$ given.  The prescribed initial data consist of 
\begin{alignat}{3}
	& v(x,0) 	 =  v_0(x), \, \div \,  v_0 = 0 	& \text { on } \To \times \{0\} \label{ini1} \\
	& d(x,0)	 = d_0(x),	\, |d_0|\equiv 1 		& \text { on } \To \times \{0\}. \label{ini4}
\end{alignat}
On $\To$, we may write  $ f \in L^2(\To,\R^2)$ as  Fourier expansion $f = \sum_{ k \in \Z^2} \hat{f_k} e^{ik\cdot (\cdot)}$. The homogeneous space of square-integrable functions is denoted by $\dot{L}^2(\To, \R^2)$ 
as well as $\dot{W}^{1,2}(\To,\R^2)$  for the homogeneous Sobolev space.  We use  $X_\div$ for (weakly) solenoidal functions in the function space $X$ (e.g.\ for $X= L^2, W^{1,p}, C^\infty...$). \\
Note that it makes sense to consider $v \in \dot{L}^2_\div$ as a solution to \eqref{equationNS} whereas $d$ is rather considered to be an element of  the nonhomogeneous space $W^{1,2}(\To,\Sp^2)$ due to the constraint $|d|\equiv 1$. It is  useful to represent $ f \in \dot{L}_\div^2(\To, \R^2)$   as
$$ f= \nabla^\perp g,$$
where $ \nabla^\perp = (- \partial_2, \partial_1)^\top $  and $g \in W^{1,2}(\To)$. That this is possible is easily seen by Fourier expansion ($f \in \dot{L}_\div^2$ implies $k \cdot \hat{f_k} = 0$ for all $k\in \dot{\Z}^2$, which in turn implies  $\hat{f_k}= (-k_2,k_1)^\top \lambda_k$ for some $\lambda_k \in \C$ and all $k \in \dot{\Z}^2=\Z^2\backslash \{ (0,0)^\top\}$).

For some Banach space $X$, the time-dependent Bochner spaces are denoted by $L^p(0,T; X)$ or $W^{1,p}(0,T; X)$ 
respectively and we use  $\no{\cdot}{L^p(0,T; X)}  = \no{\cdot}{L^p_t X_x}$ short hand for the norm.

Now we are in the position to define  a weak solution to \eqref{equationNS}--\eqref{ini4}:
\begin{definition}	\label{defWeak}
Let $T>0$. A pair 
\begin{align*}
	v \in &L^\infty(0,T; \dot{L}^2_\div(\To,\R^2)) ~  \cap ~ L^2(0,T; \dot{W}_\div^{1,2}(\To,\R^2)),  \\
  	d \in  &L^\infty (0,T; W^{1,2}(\To,\Sp^2)) ~ \cap  ~W^{1,2}(0,T;L^{4/3}(\To,\R^3))
\end{align*}
is called a weak solution
to the initial value problem \eqref{equationNS}--\eqref{equationLLG} subject to the initial conditions \eqref{ini1}--\eqref{ini4} if 
\begin{align*}
& \int_0^T \int_\To - v  \cdot\partial_t \phi - v \otimes v : \nabla \phi + \nabla v : \nabla \phi - \nabla d \odot \nabla d : \nabla \phi \, \d x \,\d t  = \int_\To v_0 \cdot  \phi \, \d x , \\
& \int_0^T \int_\To \partial_t d \cdot \xi + (v \cdot \nabla ) d  \cdot \xi + \nabla d : \nabla
\xi - |\nabla d|^2 d \cdot \xi \, \d x \,  \d t  = 0
\end{align*}
holds true for all $\phi \in C_{\div}^\infty( \To \times [0,T], \R^2)$ and $ \xi \in C^\infty( \To \times [0,T], \R^3)$ with $\phi(T), \xi(T) = 0$. Additionally, $(v,d)$  attend the initial data $(v_0,d_0) \in L^2_{\div}(\To,\R^2) \times W^{1,2}(\To, \Sp^2)$  in the weak sense, i.e.
\begin{align*}
	\int_\To v(t) \cdot \psi \,  \d x \to \int_\To v_0\cdot  \psi \, \d x, \qquad 
	\int_\To \nabla d(t) : \zeta \,  \d x \to \int_\To \nabla d_0 :\zeta \, \d x
\end{align*}
for all $\psi \in C_{\div}^\infty( \To, \R^2)$ and  $ \zeta \in C^\infty( \To, \R^{3\times 2})$ as $t \to 0^+$.
\end{definition}
%
Our notion of weak solutions resembles the usual definition of weak harmonic map heat flows (see \cite{chen2}) and weak Navier-Stokes flows (see \cite{robinson2016}).  
In contrast to previous works, we  construct these weak solutions out of weak solutions to the Ginzburg-Landau
approximation. The latter reads 
\begin{align} \label{approx1}
	& \partial_t v_\eps + (v_\eps  \cdot \nabla ) v_\eps +\nabla p_\eps -  \Delta v_\eps = -  \div(\nabla d_\eps \odot \nabla d_\eps), 
\qquad 	 \div \, v_\eps = 0, \\
	& \label{approx2} \partial_t d_\eps +(v_\eps \cdot \nabla ) d_\eps =  \Delta d_\eps + \dfrac{1}{\eps^2} (1-|d_\eps|^2) d_\eps, \\
	& v_\eps(\cdot, 0) =v_0, \quad d_\eps(\cdot, 0 ) = d_0 	\label{iniApprox}
\end{align}
for $0<\eps \leq 1$. Weak solutions $(v_\eps,d_\eps)$  to \eqref{approx1}--\eqref{iniApprox}   are known to exist globally in time. Here, we eludicate the limiting behavior as $\eps \to 0^+$. Formally, $(v_\eps, d_\eps)$  converge to solutions of \eqref{equationNS}--\eqref{ini4}. We give a precise meaning to this idea with 
the following result: 

\begin{theorem} 	\label{main-thm:one}
Let $(v_\eps,d_\eps)_{0<\eps\leq1}$ be the family of unique weak solutions to \eqref{approx1}--\eqref{iniApprox} for $(v_0,d_0) \in \dot{L}^2_\div (\To,\R^2) \times  W^{1,2}(\To, \Sp^2)$.
 Then there exists a subsequence $(\eps_j)_j$ with $\lim_{j \to \infty} \eps_j = 0^+$ such that 
\begin{align*}
(v_{\eps_j}, d_{\eps_j}) \weaks (v,d) 
\end{align*}
in $L^\infty(0,T;L^2_\div(\To,\R^2)) \times L^\infty(0,T; W^{1,2}(\To,\R^3))$  as well as pointwise a.e.~on $\To \times [0,T]$ with $(v,d)$ being a weak solution to 
 \eqref{equationNS}--\eqref{ini4} in the sense of Definition 2.1.
\end{theorem}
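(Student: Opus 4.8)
The plan is to extract a subsequence from the a-priori estimates, identify the weak and a.e.\ limits of $(v_\eps,d_\eps)$, and then pass to the limit in the two weak formulations, where the only genuine difficulty is the quadratic term $\nabla d_\eps \odot \nabla d_\eps$ in the momentum equation. First I would record the energy law for \eqref{approx1}--\eqref{iniApprox}, which gives uniform bounds on $v_\eps$ in $L^\infty_t \dot L^2_{\div,x} \cap L^2_t \dot W^{1,2}_{\div,x}$, on $d_\eps$ in $L^\infty_t W^{1,2}_x$, on $\partial_t d_\eps$ (via the equation \eqref{approx2} together with the estimate \eqref{v_tReg} announced in the introduction) in $L^2_t L^{4/3}_x$ or similar, and on the penalization term $\tfrac{1}{\eps^2}(1-|d_\eps|^2)^2$ in $L^\infty_t L^1_x$ — the last of these forces $|d_\eps|\to 1$ a.e.\ and in $L^2$, so the limit $d$ is $\Sp^2$-valued. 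By Banach--Alaoglu and the Aubin--Lions lemma (using the $\partial_t d_\eps$ bound to upgrade to strong convergence of $d_\eps$ in, say, $C_t L^2_x$ and a.e.\ on $\To\times[0,T]$), I obtain $v_{\eps_j}\weaks v$, $d_{\eps_j}\weaks d$ and $\nabla d_{\eps_j}\weak\nabla d$ in the stated spaces, together with pointwise a.e.\ convergence after a further diagonal subsequence. The convective term $v_\eps\otimes v_\eps$ passes to the limit by the usual Navier--Stokes argument: strong $L^2_{t,x}$ compactness of $v_\eps$ from Aubin--Lions using the equation to control $\partial_t v_\eps$ in a negative-order space.

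The crux is the right-hand side $-\div(\nabla d_\eps\odot\nabla d_\eps)$ of \eqref{equationNS}. Since $\nabla d_{\eps_j}$ converges only weakly in $L^2$, in general one only has $\nabla d_{\eps_j}\odot\nabla d_{\eps_j}\weaks \nabla d\odot\nabla d + \eta$ for a matrix-valued measure $\eta$, and it is \emph{not} claimed that $\eta=0$. Here is where the concentration-cancellation mechanism enters. Following the strategy outlined in the introduction, I would fix a.e.\ $t\in[0,T]$ and work with $\nabla d_{\eps_j}(t)$, which is bounded in $L^2(\To)$; appealing to the $\varepsilon_0$-regularity statement of Section 3.2 and the estimate on the concentration set of Section 3.3, the generalized Young / defect measure of $(\nabla d_{\eps_j}(t))_j$ is supported on a finite set $\Sigma(t)\subset\To$, so $\eta(\cdot,t)$ is a finite sum of point masses (times matrices), hence in particular its off-diagonal structure is highly constrained. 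Exploiting that $\div(\nabla d\odot\nabla d)$ has exactly the same algebraic form as the Euler convective term $\div(v\otimes v)$ — writing $\nabla d$, like a solenoidal velocity, in terms of a stream-type potential and using that the extra piece $\tfrac12\nabla|\nabla d|^2$ can be absorbed into the pressure — the DiPerna--Majda argument shows that when tested against a divergence-free $\phi\in C^\infty_\div(\To\times[0,T])$ the contribution of the concentration measure $\eta$ vanishes: concentration at isolated points does not survive the pairing with smooth solenoidal test functions. This yields $\int_0^T\!\!\int_\To \nabla d_{\eps_j}\odot\nabla d_{\eps_j}:\nabla\phi \to \int_0^T\!\!\int_\To \nabla d\odot\nabla d:\nabla\phi$, which is precisely the term needed in Definition 2.1.

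It remains to handle the director equation \eqref{equationLLG}. The terms $\partial_t d_\eps$, $(v_\eps\cdot\nabla)d_\eps$ and $\nabla d_\eps:\nabla\xi$ pass to the limit using weak convergence of $\partial_t d_\eps$ and $\nabla d_\eps$ together with strong convergence of $d_\eps$ and $v_\eps$; the delicate point is the penalization term: I would show $\tfrac{1}{\eps_j^2}(1-|d_{\eps_j}|^2)d_{\eps_j}\weak |\nabla d|^2 d$ in the sense of distributions by first testing \eqref{approx2} against $d_{\eps_j}\cdot(\cdot)$ to identify $\tfrac{1}{\eps_j^2}(1-|d_{\eps_j}|^2)|d_{\eps_j}|^2$ with $|\nabla d_{\eps_j}|^2$ modulo controlled terms (the standard ``Chen--Struwe'' identity, using $|d_{\eps_j}|\to1$), and then combining with the a.e.\ limit $d_{\eps_j}\to d$; alternatively one transfers the $\varepsilon_0$-regularity / concentration-set information to the director equation itself exactly as in the harmonic-map-heat-flow literature. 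Finally the initial conditions are recovered in the weak sense from the uniform $\partial_t$-bounds, which give (weak) continuity in time of $v$ and $\nabla d$ and hence attainment of $(v_0,d_0)$ as $t\to0^+$. The main obstacle, as anticipated, is establishing that the defect measure $\eta$ concentrates on a finite set for a.e.\ fixed $t$ and that its contribution cancels against divergence-free test functions — everything else is a by-now-standard compactness argument.
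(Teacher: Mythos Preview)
Your outline has the right architecture, but there are two concrete gaps where the proposed mechanism would not go through as written.

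\textbf{Director equation.} Your plan to show $\tfrac{1}{\eps^2}(1-|d_\eps|^2)d_\eps \weak |\nabla d|^2 d$ by testing \eqref{approx2} against $d_\eps\cdot(\cdot)$ and invoking a ``Chen--Struwe identity'' does not close. That testing produces $d_\eps\cdot\Delta d_\eps = \tfrac12\Delta|d_\eps|^2 - |\nabla d_\eps|^2$, so you obtain a relation between $\tfrac{1}{\eps^2}(1-|d_\eps|^2)|d_\eps|^2$ and $|\nabla d_\eps|^2$ modulo terms that do go to zero---but $|\nabla d_\eps|^2$ itself does \emph{not} converge to $|\nabla d|^2$ (that is precisely the defect you are trying to control in the other equation), so you cannot identify the limit of the penalisation this way. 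The paper avoids the issue entirely by the wedge-product trick: test \eqref{approx2} with $d_\eps\wedge\xi$, which kills the penalisation term outright because $d_\eps\wedge d_\eps=0$, and leaves only $d_\eps\wedge\nabla d_\eps$ paired against $\nabla\xi$. This passes to the limit using only \emph{weak} convergence of $\nabla d_\eps$ and strong $L^p$ convergence of $d_\eps$; then one recovers \eqref{equationLLG} by choosing $\xi=d\wedge\Phi$ and the Lagrange identity $(a\wedge b)\cdot(c\wedge d)=(a\cdot c)(b\cdot d)-(b\cdot c)(a\cdot d)$, using $|d|=1$ and $d\cdot\partial d=0$.

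\textbf{Concentration--cancellation.} Two points here. First, the stream-function representation is for the solenoidal \emph{test function} $\phi=\nabla^\perp\eta$, not for $\nabla d$ (which is a gradient-type object, not divergence-free). Second, your heuristic ``concentration at isolated points does not survive pairing with smooth solenoidal test functions'' is false as a general statement: a matrix-valued Dirac $M\delta_{x_0}$ paired with $\nabla\phi$ gives $M:\nabla\phi(x_0)$, which need not vanish for divergence-free $\phi$ unless $M$ is a multiple of the identity. The paper's argument is more delicate. At a fixed $t\in A$ one writes $\phi=\nabla^\perp\eta$, uses rotational covariance of the equation to reduce, near each concentration point, to $\eta=h(x_1)\chi(x)$ depending on one variable on $B_{r/2}$; then $\nabla\nabla^\perp\eta$ has the single nonzero entry $h''$, so only $[\nabla d_\eps\odot\nabla d_\eps]_{2,1}h''$ survives there. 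One replaces $h$ by the 1D cutoff $h_n$ solving $h_n''=(1-\mathds{1}_{(-1/n,1/n)})h''$ with matching boundary values, passes $\eps\to 0$ (local strong convergence away from the point), and \emph{then} lets $n\to\infty$ by dominated convergence applied to the $L^1$ function $[\nabla d\odot\nabla d]_{2,1}$ (not to the $\eps$-sequence). The improved bound $\partial_t v_\eps\in L^2_t X_s^\ast$ for $s>2$, rather than merely $(W^{1,\infty}_\div)^\ast$, is what makes the $\partial_t v$ term stable under $\nabla^\perp\eta_n\to\nabla^\perp\eta$ in $X_s$; this is the essential structural difference from the time-dependent Euler setting that you should flag explicitly.
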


Theorem 2.1 provides a new argument to establish existence of weak solutions to the Ericksen-Leslie model which 
satisfy the physically reasonable energy inequality:

\begin{theorem} \label{main-thm:two}
Suppose $v_0 \in \dot{L}^2_\div (\To,\R^2)$ and $d_0 \in W^{1,2}(\To, \Sp^2)$. Then there exists a weak solution $(v,d)$ in the sense of Definition \ref{defWeak} to the system \eqref{equationNS}--\eqref{ini4} that satisfies the energy inequality, i.e.
\begin{align*}
\int_\To |v|^2(t) + |\nabla d|^2(t)\, \d x + 2\int_0^t \int_\To |\nabla v|^2 + \left| \Delta d + |\nabla d|^2 d \right|^2 \, \d x \, \d t
\leq \int_\To |v_0|^2 + |\nabla d_0|^2 \d x
\end{align*}
is valid for almost all $t \in[0,T]$.
\end{theorem}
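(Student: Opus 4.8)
The plan is to carry the energy inequality satisfied by the Ginzburg--Landau approximations through the limit supplied by Theorem \ref{main-thm:one}, exploiting weak lower semicontinuity of the $L^2$-norm.

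First I would record the energy inequality for the weak solutions $(v_\eps,d_\eps)$ of \eqref{approx1}--\eqref{iniApprox}: for almost every $t\in[0,T]$,
\begin{align*}
&\int_\To |v_\eps|^2(t) + |\nabla d_\eps|^2(t) + \tfrac{1}{\eps^2}(1-|d_\eps|^2)^2(t)\,\d x \\
&\qquad + 2\int_0^t\int_\To |\nabla v_\eps|^2 + \Big|\Delta d_\eps + \tfrac{1}{\eps^2}(1-|d_\eps|^2)d_\eps\Big|^2 \,\d x\,\d t \;\le\; \int_\To |v_0|^2 + |\nabla d_0|^2\,\d x,
\end{align*}
the penalization part of the initial energy being absent because $|d_0|\equiv 1$. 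Setting $g_\eps := \Delta d_\eps + \tfrac{1}{\eps^2}(1-|d_\eps|^2)d_\eps$, which by \eqref{approx2} equals $\partial_t d_\eps + (v_\eps\cdot\nabla)d_\eps$, this inequality already gives uniform bounds for $(g_\eps)_\eps$ in $L^2(0,T;L^2(\To,\R^3))$ and for $(\nabla v_\eps)_\eps$ in $L^2(0,T;L^2)$; the nonnegative penalization term at time $t$ may be dropped from the left-hand side.

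Next I would pass to the limit along the subsequence $(\eps_j)_j$ of Theorem \ref{main-thm:one}. The a priori estimates of Section 3.1 together with the uniform bounds on $\partial_t v_\eps$ and $\partial_t d_\eps$ give equicontinuity of $v_{\eps_j}$ and $d_{\eps_j}$ in the relevant weak topologies, so that $v_{\eps_j}(t)\weak v(t)$ in $\dot{L}^2_\div(\To)$ and $\nabla d_{\eps_j}(t)\weak \nabla d(t)$ in $L^2(\To)$ for every $t\in[0,T]$, while also $\nabla v_{\eps_j}\weak \nabla v$ in $L^2(0,T;L^2)$. The decisive point is to identify the weak $L^2(0,T;L^2(\To,\R^3))$-limit of $g_{\eps_j}$: it is bounded, hence relatively weakly compact there, and testing the identity $g_{\eps_j}=\partial_t d_{\eps_j}+(v_{\eps_j}\cdot\nabla)d_{\eps_j}$ against smooth maps and using the strong $L^2(\To\times[0,T])$-convergence of $v_{\eps_j}$ and $d_{\eps_j}$ shows that every weak limit agrees, as a distribution, with $\partial_t d + (v\cdot\nabla)d$. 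Because $(v,d)$ solves \eqref{equationLLG} in the weak sense by Theorem \ref{main-thm:one}, this distribution equals $\Delta d + |\nabla d|^2 d$, so the whole sequence satisfies $g_{\eps_j}\weak \Delta d + |\nabla d|^2 d$ in $L^2(0,T;L^2(\To,\R^3))$.

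Finally I would fix $t$ in the full-measure set on which the Ginzburg--Landau inequality holds for all $\eps_j$ and take $\liminf_{j\to\infty}$ of both sides. Weak lower semicontinuity of the $L^2$-norm applied termwise --- to $v_{\eps_j}(t)\weak v(t)$, to $\nabla d_{\eps_j}(t)\weak\nabla d(t)$, to $\nabla v_{\eps_j}\weak\nabla v$ and $g_{\eps_j}\weak \Delta d + |\nabla d|^2 d$ on $\To\times[0,t]$ --- combined with the superadditivity of $\liminf$ yields exactly the asserted energy inequality for almost every $t$. I expect the main obstacle to be the identification of the weak $L^2_tL^2_x$-limit of the forcing $g_{\eps_j}$ with $\Delta d+|\nabla d|^2 d$, which is precisely where the weak convergences of $(v_{\eps_j},d_{\eps_j})$ and, crucially, the already established fact that $(v,d)$ solves \eqref{equationLLG} are used; a minor technical point is to secure the fixed-time convergences $v_{\eps_j}(t)\weak v(t)$ and $\nabla d_{\eps_j}(t)\weak\nabla d(t)$ on a common full-measure set of times, which follows from the uniform time-regularity of the approximations.
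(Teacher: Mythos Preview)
Your proposal is correct and follows the same route as the paper: existence comes from Theorem~\ref{main-thm:one}, and the energy inequality is obtained by passing to the $\liminf$ in the approximate energy law~\eqref{energyLaw} using the convergences~\eqref{conv1}--\eqref{conv9} (in particular \eqref{conv7}, which is exactly your identification $g_{\eps_j}\weak \partial_t d+(v\cdot\nabla)d$) together with weak lower semicontinuity of the $L^2$-norm. The paper's proof is a one-line summary of precisely this argument; your version simply spells out the identification of the weak limit of $g_{\eps_j}$ with $\Delta d+|\nabla d|^2 d$ via the already established director equation.
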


\begin{remark}[Uniqueness] 
As for the harmonic map heat flow, we do not know whether the solution in Theorem~2.2 is a solution in the sense of Struwe in \cite{struwe}. In particular, the energy is not known to be nonincreasing. For the harmonic map heat flow,
Bertsch et al.\  \cite{bertsch}  and Topping \cite{topping}  proved   the existence of infinitely many weak solutions with conserved but increasing
energy at certain time steps. The same behaviour may be possible in our case. 
\end{remark}

\begin{remark}[Stability with respect to initial data]
It can easily be checked that Theorem \ref{main-thm:one} remains true for a sequence of initial data
$$ (v_0^\eps, d_0^\eps)_\eps \to (v_0, d_0) $$
strongly in $\dot{L}^2_\div(\To,\R^2) \times W^{1,2}(\To,\Sp^2)$. However, if only weak convergence is given, a result like Theorem \ref{main-thm:one} may not be available in general since various oscillation and concentration effects occur. 
\end{remark}

\section{Proofs of Theorem 2.1 and 2.2}

\subsection{A-priori estimates}

This section is devoted to establishing the Ginzburg-Landau approximation and the collection of (mostly standard) a-priori estimates. As for most systems arising from physics, this is done by employing the energy law associated to the system and secondary estimates on the time derivatives via duality.

In \cite{lin1995} (see also \cite{schloemerkemper2018}), Lin and Liu showed  that global-in-time weak solutions to the Ginzburg-Landau approximation exist for initial data\footnote{In \cite{lin1995}, $d_0 \in W^{3/2,2}(\partial \Omega)$ is required for a smooth domain. This is omitted since $\partial \To = \emptyset$.} 
$(v_0,d_0) \in L^2_\div( \To, \R^2) \times W^{1,2}(\To, \Sp)$ on a smooth bounded domain $\Omega\subset \R^2$. The result was proven 
by a Galerkin approximation scheme and  carries over to the case $\Omega = \To$. More precisely, there is a unique pair
\begin{align*}
	v_\eps \in &L^\infty(0,T; {L}^2_\div(\To,\R^2)) ~  \cap ~ L^2(0,T; {W}_\div^{1,2}(\To,\R^2)),  \\
  d_\eps \in  &L^\infty (0,T; W^{1,2}(\To,\Sp^2)) ~  \cap ~  L^2(0,T;W^{2,2}(\To,\R^3))
\end{align*}
solving \eqref{approx1}--\eqref{iniApprox} in the weak sense. This regularity suffices  
to perfom  typical calculations yielding energy estimates and a-priori bounds. Multiplying  \eqref{approx1} by $v$, \eqref{approx2} by $-\Delta d - \frac{1}{\eps^2}(1-|d|^2)d$ and integrating over $\To \times [0,t]$, we obtain 
\begin{equation} \label{energyLaw}
\begin{split}
\int_\To |v_\eps(t)|^2 + & |\nabla d_\eps(t)|^2 + \frac{1}{2\eps^2} (1-|d_\eps(t)|^2)^2   + 2 \int_0^t \int_\To |\nabla v_\eps|^2 + \left|\Delta d_\eps  + \frac{1}{\eps^2} (1-|d_\eps|^2)d_\eps\right|^2 \\
&=
 \int_\To |v_0|^2 + |\nabla d_0|^2 
=:2 E_0.
\end{split}
\end{equation}
Here we benifited from the fact that $|d_0|\equiv 1$ almost everywhere. Further,  $ d_\eps$  enjoys a maximum principle (see \cite{alouges,lin2}):

\begin{lemma} \label{maximumPrinciple}
Suppose $(v_\eps, d_\eps)$ is a solution to \eqref{approx1}--\eqref{iniApprox}. Then $d_\eps$ satisfies
$$ | d_\eps(x,t)|\leq 1$$
for almost every $(x,t) \in \To \times [0,T]$.
\end{lemma}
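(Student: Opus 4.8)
The plan is to derive a scalar parabolic differential inequality for $u := |d_\eps|^2 - 1$ and then to run a Stampacchia-type truncation argument. First I would take the scalar product of \eqref{approx2} with $2 d_\eps$. Using $2 d_\eps \cdot \partial_t d_\eps = \partial_t |d_\eps|^2$, $2 d_\eps \cdot (v_\eps\cdot\nabla) d_\eps = (v_\eps \cdot \nabla)|d_\eps|^2$, the identity $2 d_\eps \cdot \Delta d_\eps = \Delta |d_\eps|^2 - 2|\nabla d_\eps|^2$, and $\frac{2}{\eps^2}(1-|d_\eps|^2)|d_\eps|^2 = -\frac{2}{\eps^2}u(u+1)$, this yields
\[
 \partial_t u + (v_\eps \cdot \nabla) u - \Delta u = -2|\nabla d_\eps|^2 - \frac{2}{\eps^2} u(u+1) \le -\frac{2}{\eps^2}u(u+1),
\]
since $|\nabla d_\eps|^2 \ge 0$. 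The regularity $d_\eps \in L^2(0,T;W^{2,2}) \cap L^\infty(0,T;W^{1,2})$ guaranteed by the Lin--Liu construction is what makes each of these manipulations licit: in two dimensions $W^{2,2}(\To) \hookrightarrow C^0(\To)$, so all the products above lie in, say, $L^2(0,T;L^1)$, and $u \in L^2(0,T; W^{1,2}) \cap L^\infty(0,T;L^2)$ with $\partial_t u$ in a suitable dual space.

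Next I would test this inequality with the non-negative truncation $u_+ := \max\{u,0\} \in L^2(0,T; W^{1,2}(\To))$. The transport term drops out because $\div \, v_\eps = 0$:
\[
 \int_\To (v_\eps \cdot \nabla u)\, u_+ \, \d x = \int_\To v_\eps \cdot \nabla\bigl(\tfrac12 u_+^2\bigr)\, \d x = -\tfrac12\int_\To (\div \, v_\eps)\, u_+^2 \, \d x = 0,
\]
integration by parts gives $-\int_\To (\Delta u)\, u_+ \, \d x = \int_\To |\nabla u_+|^2 \, \d x$, and on $\{u>0\}$ one has $u(u+1) = u_+(u_++1) \ge 0$, so the right-hand side contributes a non-positive term. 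Hence
\[
 \frac{1}{2}\frac{\d}{\d t}\int_\To u_+^2\, \d x + \int_\To |\nabla u_+|^2\, \d x \le 0 \qquad \text{for a.e.\ } t \in [0,T].
\]
Since $|d_0| \equiv 1$ we have $u(\cdot,0) = 0$, hence $u_+(\cdot,0) = 0$, and integrating in time forces $\int_\To u_+^2(t)\, \d x = 0$ for a.e.\ $t$, i.e.\ $u \le 0$ a.e., which is exactly $|d_\eps| \le 1$.

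The only genuinely delicate point is the rigorous justification of the testing step: that $u_+$ is an admissible test function in the weak formulation of the equation satisfied by $u$, that the chain rule $\nabla u_+ = \mathds{1}_{\{u>0\}}\nabla u$ and $\langle \partial_t u, u_+\rangle = \tfrac12 \tfrac{\d}{\d t}\|u_+\|_{L^2(\To)}^2$ hold, and that one may pass the differential inequality to its integrated (weak) form. I expect this to be routine given the parabolic regularity above — one argues by a standard mollification of $t\mapsto u(t)$ in time together with a smooth approximation of $s \mapsto s_+$ — and it is carried out in \cite{alouges,lin2}; I would reproduce the short argument for completeness.
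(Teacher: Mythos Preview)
Your proposal is correct and follows essentially the same route as the paper: both derive a scalar parabolic inequality for $|d_\eps|^2-1$, test against (a version of) its positive part, exploit $\div\,v_\eps=0$ to kill the transport term, and use $|d_0|\equiv 1$ to conclude. The only cosmetic difference is that the paper works with a doubly truncated quantity $h_\eps^k$ (equal to $|d_\eps|^2-1$ on $\{1<|d_\eps|\le k\}$, constant $k^2-1$ above, and zero on $\{|d_\eps|\le 1\}$) and tests the inequality against $h_\eps^k$ itself, whereas you test against $u_+$ directly; the upper cutoff at $k$ merely guarantees a bounded test function and is otherwise immaterial.
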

\begin{proof}
For $k \in \N$ we define the auxiliary function $h_\eps^k :\To \times [0,T]\to \R$ by
\begin{align*}
	h_\eps^k(x,t) = \begin{cases}
	k^2-1 & \text{for } k < |d_\eps(x,t)|, \\
	|d_\eps(x,t) |^2 -1  & \text{for } 1 < |d_\eps(x,t)| \leq k, \\
	0 & \text{for } |d_\eps(x,t)| \leq 1.
	\end{cases}
\end{align*}
By \eqref{approx2}, we have 
\begin{align*}
\partial_t h_\eps^k + v_\eps \cdot \nabla h_\eps^k & =  \Delta h_\eps^k -2 \chi_{\{1 < |d_\eps| \leq k\} } \left( |\nabla d_\eps|^2 + \frac1{\eps^2} (|d_\eps|^2 -1) |d_\eps|^2) \right) \\
& \leq \Delta h_\eps^k
\end{align*}
in the weak sense. Next we multiply the differential inequality by $h_\eps^k$,  and an integration by parts yields (due to the periodicity of $\To$ and $|d_0| \equiv 1$)
\begin{align*}
\frac12 \int_\To |h_\eps^k(t)|^2 + \int_0^t \int_\To |\nabla h_\eps^k|^2 \leq - \int_0^t \underbrace{\int_\To v_\eps \cdot \frac{\nabla}{2}|h_\eps^k|^2}_{=0} = 0.
\end{align*}
This can only be true if $h_\eps^k = 0$ a.e. on $\To \times [0,T)$ and therefore the assertion follows.
\end{proof}

The energy law \eqref{energyLaw} and Lemma \ref{maximumPrinciple} yield  a-priori bounds
\begin{align*}
	 \no{v_\eps}{L^\infty_t \Ld^2_x} 	& \leq C, \\
	\no{v_\eps}{L^2_t \Wd^{1,2}_x} 	& \leq C, \\
	\no{1-|d_\eps|^2}{L^\infty_t L^2_x} 	& \leq C\eps, \\
	\no{d_\eps}{L^\infty_{t,x}} 	& \leq 1, \\
	\no{\nabla d_\eps}{L^\infty_t L^2_x} 	& \leq C, \\
	 \no{\Delta d_\eps +\frac{1}{\eps^2}(1-|d_\eps|^2)d_\eps }{L^2_t L^2_x } 	& \leq C,
\end{align*}
 uniformly in $\eps >0$. Ladyzhenskaya's inequality also implies 
$$ \no{v_\eps}{L^4_tL^4_x} 	 \leq C.$$
In order to achieve strong convergence we make use of the generalized Aubin-Lions lemma \cite[Lemma 7.7]{roubicek} for which some bounds on the time derivatives of $(v_\eps, d_\eps)$ are needed. The estimates above and \eqref{approx2} allow us to deduce 
$$ \no{\partial_t d_\eps}{L^2_t L^\frac43_x} \leq C.$$
Considering $\partial_t v_\eps$, we first note that for $\phi \in C_{\div}^\infty(\To,\R^2)$ one has
$$\frac{1}{\eps^2} \int_\To (\nabla d_\eps)^\top  (1-|d_\eps|^2)d_\eps  \cdot \phi = - \frac{1}{4\eps^2} \int_\To \nabla (1-|d_\eps|^2)^2  \cdot \phi =0. 				 $$
Secondly, we employ the identity $\div (\nabla d_\eps \odot \nabla d_\eps) = \nabla \frac{|\nabla d_\eps|^2}{2} + (\nabla d_\eps)^\top \Delta d_\eps$.
Now testing   \eqref{approx1} by  $\phi \in C_{ \div}^\infty(\To\times [0,T], \R^2)$ gives rise to 
\begin{align*}
&\left|\int_{\To\times[0,T]}  \right.  \partial_t v_\eps \cdot   \phi \bigg| \\
				&  \leq \left| \int_{\To\times[0,T]}  v_\eps \otimes v_\eps :\nabla \phi \right|  + \left| \int_{\To\times[0,T]}  \nabla v_\eps : \nabla \phi \right| 
				 + \left| \int_{\To\times[0,T]}  (\nabla d_\eps)^\top \left( \Delta d_\eps + \frac{1}{\eps^2} ( 1- |d_\eps|^2)d_\eps \right) \cdot \phi \right| \\
				  &\leq \left( \no{v_\eps}{L^4_{t} L^4_x}^2+ \no{v_\eps}{L^2_t \Wd^{1,2}_x} 
				+ \no{\nabla d_\eps}{L^\infty_t L^2_x} \cdot  \no{\Delta d_\eps + \frac{1}{\eps^2}(1-|d_\eps|^2)d_\eps}{L^2_{t} L^2_x}\right) \times \\
			& \,\,\,\,\,\,\,\,  \times  \left( \no{\phi}{L^2_t W^{1,2}_\div} +	\no{\phi}{L^2_t C_\div}\right).
\end{align*}
Since $X_s :=W^{1,s}_{\div}(\To,\R^2) \subset W^{1,2}_\div (\To ,\R^2) \cap C_{\div}(\To ,\R^2) $ is true for any $s>2$, the estimate
\begin{equation} \label{v_tReg}
\no{\partial_t v_\eps }{L^2_t X_s^{*}} \leq C
\end{equation} 
is valid independently of $\eps >0$. We point out that we benefited from the interplay of solenoidal test functions and the  gradient flow structure of the system to improve the control  in space of $\partial_t v_\eps$ compared to the standard  estimate $\partial_t v_\eps \in L^2 (0,T; (W_\div^{1,\infty}(\To,\R^2))^\ast )$. This is in sharp contrast to the framework 
of the Euler equations and allows us to use the concentration-cancellation techniques from \cite{diperna} later on. \\
As a consequence of the above estimates, we can choose a subsequence $(\eps_i)_{i \in \N} \subset (0,1]$ with $\lim_{i \to \infty} \eps_i = 0^+$ such that 
\begin{align}
\label{conv1}v_{\eps_i} \to v 	\qquad 											&	 \text{in } L^2(\To \times [0,T], \R^2) \text{ and a.e.}, \\
						\nabla v_{\eps_i} \weak \nabla v 	\qquad 				& \text{in } L^2(\To \times [0,T], \R^{2 \times 2}), \\
\label{conv3}						\partial_t v_{\eps_i} \weak \partial_t v	\qquad 	& \text{in } L^2(0,T;X_s^*) \text{ for } s>2, \\
\label{conv4}d_{\eps_i} \to d \qquad 													& \text{in } L^p( \To \times [0,T],\R^3) \text { for any } p\in (1, \infty)
																																\text{ and a.e.}, \\
\label{conv5}|d_{\eps_i}|^2 \to 1 \qquad 											& \text{in } L^\infty(0,T; L^1(\To)) 
																																\text{ and a.e.}, \\
\label{conv6}\nabla d_{\eps_i} \weaks  \nabla d \qquad				& \text{in } L^\infty(0,T; L^2(\To,\R^{3 \times 2})), \\
\label{conv7}	\partial_t d_{\eps_i} + (v_{\eps_i} \cdot \nabla ) d_{\eps_i}  \weak \partial_t d+ (v \cdot \nabla) d
						\qquad 																					& \text{in } L^2( \To \times [0,T], \R^3).
\end{align}
Additionally, we  can choose the subsequence  such that
\begin{align}
\label{conv8}		v_{\eps_i}(\cdot,t)  \to v(\cdot,t) 	\qquad 				&	 \text{in } L^2(\To,\R^2) \text{ for a.a. } t \in [0,T], \\
\label{conv9}		d_{\eps_i}(\cdot,t)  \to d(\cdot,t) 	\qquad 				&	 \text{in } L^2(\To, \R^3) \text{ for a.a. } t \in [0,T]. 
\end{align}

\subsection{$\varepsilon_0$-regularity}

According to the idea of fixing  certain time steps in $[0,T]$ and passing to the limit, we
consider the equation
\begin{align} \label{appr harm}
\Delta  u_\eps + \frac1{\eps^2}(1-|u_\eps|^2)u_\eps = \tau_\eps
\end{align}
on $\To$ for some $\tau_\eps \weak \tau$ in $L^2(\To,\R^3)$ and $u_\eps \in W^{2,2}(\To,\R^3)$ being a strong solution for $\eps >0$. This is motivated by equation \eqref{approx2} where taking $u_\eps= d_\eps(t)$ (at first formally) for  fixed $t \in [0,T]$ leads to this situation. As $\eps \to 0^+$, we have a singular limit problem and we expect that $u_\eps$ converges to an approximated harmonic map $u : \To \to \Sp^2$, i.e. 
$$ \Delta u + |\nabla u|^2 u = \tau - (\tau \cdot u ) u $$
in some sense. Strong convergence of  $(u_\eps)_\eps$ in $W^{1,2}$ cannot be expected even in two dimensions (see e.g.\ \cite{bethuel1994,lin1999}). 
Using the general idea of partial regularity for elliptic equations, we obtain strong convergence of $u_\eps$ in $W^{1,2}$ except of a finite set  in $\To$. 

Inspired by \cite{lin1999,lin2}, we prove an  $\eps_0$-regularity theorem which leads to locally strong convergence of $(u_\eps)_\eps$ in $W^{1,2}$. Because of the derived energy estimate \eqref{energyLaw}, we assume 
\begin{equation} \label{energyBound}
\sup_{0< \eps \leq 1} \int_\To \frac12 |\nabla u_\eps|^2 + \frac{1}{4\eps^2}(1-|u_\eps|^2)^2 \leq E_0.
\end{equation}

\begin{theorem}[$\eps_0$-regularity] \label{epsRegularity}
Suppose that $(u_\eps)_\eps$ is a sequence of strong solutions to (\ref{appr harm}) with $ 0 < \eps \leq1$ satisfying \eqref{energyBound}. Further, let $\tau_\eps \weak \tau$ in $L^2(\To, \R^3)$ for $\eps \to 0^+$  and $|u_\eps|\leq 1$ for $\eps >0$.
Then there exists an $\varepsilon_0 >0$  such that if for $x_0 \in \To$ 
\begin{align*}
\sup_{0< \eps \leq 1} \int_{B_{r_1}(x_0)} \frac12 |\nabla u_\eps|^2 + \frac1{4\eps^2}(1-|u_\eps|^2)^2 \leq \varepsilon_0^2
\end{align*}
holds true for some $r_1>0$, there exists a subsequence\footnote{not relabeled} with $u_\eps \to u$ strongly in $W^{1,2}(B_{r_1/4}(x_0), \R^3)$ for $\eps \to 0^+$.
\end{theorem}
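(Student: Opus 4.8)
Here is my plan for proving the $\eps_0$-regularity theorem.

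\medskip

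\textbf{Plan of proof.} The plan is to establish a small-energy elliptic estimate that upgrades the $L^2$ bound on $\nabla u_\eps$ to a higher-integrability or $W^{2,2}$-type bound on a smaller ball, uniformly in $\eps$, and then to bootstrap. First I would localize: fix a cutoff $\varphi \in C_c^\infty(B_{r_1}(x_0))$ with $\varphi \equiv 1$ on $B_{r_1/2}(x_0)$, multiply \eqref{appr harm} by suitable test functions of the form $\varphi^2(u_\eps - \bar u_\eps)$ (with $\bar u_\eps$ the mean of $u_\eps$ on the ball), and integrate by parts to get a Caccioppoli-type inequality controlling $\int \varphi^2 |\nabla u_\eps|^2$ by lower-order terms plus $\int \varphi^2 |\tau_\eps| |u_\eps - \bar u_\eps|$ and the penalty contribution $\frac{1}{\eps^2}\int \varphi^2 (1-|u_\eps|^2)|u_\eps||u_\eps-\bar u_\eps|$. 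The key observation, exactly as for approximate harmonic maps (cf.\ \cite{lin1999}), is that the right-hand side of \eqref{appr harm} is, after using $|u_\eps|\le 1$ and the structure $\Delta u_\eps \cdot u_\eps = \tfrac12 \Delta |u_\eps|^2 - |\nabla u_\eps|^2$, effectively $|\nabla u_\eps|^2 u_\eps$ modulo $\tau_\eps$ and a term that the energy bound \eqref{energyBound} controls; so the nonlinearity has the critical quadratic-gradient form. In two dimensions, $\int_{B_r} |\nabla u_\eps|^2 \le \eps_0^2$ small is precisely the threshold at which one can absorb $\int \varphi^2 |\nabla u_\eps|^2 \cdot |\nabla u_\eps|^2$-type terms via the Sobolev/Poincaré inequality $\no{\varphi (u_\eps - \bar u_\eps)}{L^4}^2 \le C \no{\nabla(\varphi(u_\eps-\bar u_\eps))}{L^2}^2$ and Hölder, for $C\eps_0^2 < 1$.

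\medskip

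Concretely, the steps in order: (1) Derive the localized energy inequality and choose $\eps_0$ so small that the critical term is absorbed, yielding $\no{\nabla u_\eps}{L^2(B_{r_1/2}(x_0))}$ controlled and, more importantly, setting up a Morrey-type decay or a reverse-Hölder inequality for $|\nabla u_\eps|^2$ on all small balls inside $B_{r_1/2}(x_0)$. (2) Apply Gehring's lemma (or a direct Moser iteration) to obtain $\nabla u_\eps \in L^{2+\delta}_{\loc}$ with a uniform-in-$\eps$ bound on $B_{r_1/4}(x_0)$, some $\delta>0$. (3) Feed this into \eqref{appr harm} written as $\Delta u_\eps = \tau_\eps - \frac{1}{\eps^2}(1-|u_\eps|^2)u_\eps$: the penalty term is bounded in $L^2$ uniformly by \eqref{energyBound} (since $\frac{1}{\eps^2}(1-|u_\eps|^2)^2$ is bounded in $L^1$ and $|u_\eps|\le 1$ gives $\frac{1}{\eps}(1-|u_\eps|^2)$ bounded in $L^2$, hence $\frac{1}{\eps^2}(1-|u_\eps|^2)|u_\eps|$ is $\frac{1}{\eps}$ times an $L^2$-bounded quantity — this needs care, see the obstacle below), and $\tau_\eps$ is bounded in $L^2$; by elliptic $L^2$-regularity one gets a uniform $W^{2,2}_{\loc}$ bound, hence $\nabla u_\eps$ is bounded in $W^{1,2}_{\loc}$, which is compactly embedded in $L^2_{\loc}$ in two dimensions. (4) Extract the strong $W^{1,2}(B_{r_1/4}(x_0))$-convergent subsequence, identifying the limit with $u$ from the global weak convergence \eqref{conv6}.

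\medskip

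\textbf{Main obstacle.} The delicate point is the penalty term $\frac{1}{\eps^2}(1-|u_\eps|^2)u_\eps$: naively it is only $O(1/\eps)$ in $L^2$, not uniformly bounded, so step (3) as stated is too optimistic. The resolution, following \cite{lin1999,bethuel1994}, is that one must \emph{not} estimate this term by brute force but keep it paired with $u_\eps$ (or $\nabla u_\eps$) in the weak formulation: testing \eqref{appr harm} against $\varphi^2 u_\eps$ and using $|u_\eps|^2 \le 1$ shows $\frac{1}{\eps^2}\int \varphi^2 (1-|u_\eps|^2)|u_\eps|^2 \le \int \varphi^2 |\nabla u_\eps|^2 + (\text{l.o.t.}) + \int \varphi^2 \tau_\eps\cdot u_\eps$, so this nonnegative quantity is controlled by the small energy. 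Then in the Caccioppoli step one uses $(1-|u_\eps|^2)|u_\eps| |u_\eps - \bar u_\eps| \le (1-|u_\eps|^2)|u_\eps|^2 + (1-|u_\eps|^2)|u_\eps||\bar u_\eps - (\text{something})|$ together with a clever algebraic manipulation so that only the controlled combination appears. So the heart of the matter is arranging the test functions and the pointwise inequalities so that the penalty term is always "spent" against either $|\nabla u_\eps|^2$ or the bounded $L^1$ quantity $\frac{1}{\eps^2}(1-|u_\eps|^2)^2$, never standing alone. Once this bookkeeping is done correctly and $\eps_0$ is fixed accordingly, the rest is a standard bootstrap and compactness argument.
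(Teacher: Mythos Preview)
Your plan has a genuine gap in step (3), and your ``main obstacle'' paragraph correctly identifies but does not resolve it. The penalty term $\frac{1}{\eps^2}(1-|u_\eps|^2)u_\eps$ is only $O(1/\eps)$ in $L^2$, and testing against $\varphi^2 u_\eps$ as you propose yields control of $\frac{1}{\eps^2}(1-|u_\eps|^2)|u_\eps|^2$ only in $L^1$, not in any $L^p$ with $p>1$. No bookkeeping with test functions of the form $\varphi^2(u_\eps - \bar u_\eps)$ will produce a uniform $W^{2,p}$ bound on $u_\eps$ itself, because the right-hand side of \eqref{appr harm} genuinely fails to be uniformly in $L^p$ for any $p>1$. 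Consequently Gehring/reverse H\"older for $|\nabla u_\eps|^2$, even if it goes through, does not deliver the compactness you need in step (4): $L^{2+\delta}$-boundedness of gradients alone does not imply strong $L^2$ convergence of gradients.

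The paper's proof takes a route that sidesteps the penalty entirely. First, a rescaling argument (set $\hat u_\eps(x) = u_\eps(x_1 + \eps x)$, apply interior elliptic $W^{2,2}$ estimates and Morrey) gives $\eps$-scale H\"older continuity; combined with the small-energy hypothesis this forces $|u_\eps| \geq \tfrac12$ on $B_{r_1/2}$, since a point where $|u_\eps| < \tfrac12$ would create a ball of radius $\sim \eps$ on which the potential contribution $\frac{1}{4\eps^2}(1-|u_\eps|^2)^2$ alone exceeds $\varepsilon_0^2$. Once $|u_\eps| \geq \tfrac12$, the polar decomposition $u_\eps = \rho_\eps \psi_\eps$ with $\psi_\eps = u_\eps/|u_\eps|$ is available: the equation for the $\Sp^2$-valued $\psi_\eps$ contains \emph{no} penalty term, so the standard approximate-harmonic-map absorption (small $\no{\nabla u_\eps}{L^2}$ absorbs the quadratic nonlinearity after the $W^{2,4/3}$ elliptic estimate and Sobolev) yields a uniform $W^{2,4/3}$ bound on $\psi_\eps$ and hence compactness of $\nabla\psi_\eps$ in $L^2$; separately $\rho_\eps \to 1$ strongly in $W^{1,2}$ by testing its scalar equation against $1-\rho_\eps$. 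The structural fact you are missing is that the penalty is \emph{parallel} to $u_\eps$, so it vanishes from the angular equation --- this, not a clever Caccioppoli inequality, is what makes the bootstrap close.
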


\begin{proof} Let $x_0 =0$ without loss of generality. The proof is divided into four steps.\\
\textit{Step 1:} We show that
 $$|u_\eps(x)-u_\eps(y)|\leq C \left( \frac{|x-y|}{\eps}						\right)^{1/2}
 \quad \text{on } B_{r_1/2}(x_1)$$   for any $x_1 \in B_{r_1/2}$.  Indeed, introducing the  scaled solution  $\hat{u_\eps}(x) = u_\eps (x_1 + \eps x)$ yields 
 $$ \Delta \hat{u_\eps} = - (1 - |\hat{u_\eps}|^2) \hat{u_\eps} + \hat{\tau_\eps}$$
on $B_{r_1/(2\eps)}$ for $\hat{\tau_\eps}(x) = \eps^2 \tau_\eps (x_1 + \eps x)$. Using elliptic theory \cite[Theorem 9.9]{gilbarg}, we obtain the estimate
  $$ \no{\hat{u_\eps}}{W^{2,2}(B_{r_1/(2\eps)})} \lesssim1 +  \no{\hat{\tau_\eps}}{L^2(B_{r_1/\eps})} \leq C$$
 for every $\eps \in (0,1]$. Now the Morrey embedding $W^{2,2}\hookrightarrow 			C^{1/2}$ and  rescaling back imply the assertion.\\
\textit{Step 2:} We use the H\"older continuity to show that $|u_\eps(x)| \geq \frac12$ on  $B_{r_1/2}$.   On the contrary, assume there existed some $x_1\in B_{r_1/2}$ with $|u_\eps(x_1)|< 1/2$. Because of the H\"older estimate above we have, for $ x \in B_{ \eps \theta_0}(x_1)$, that
 $$ |u_\eps (x)|\leq \frac34 $$
 provided $0<\theta_0 < \frac1{16C^2}$. Therefore it follows
 \begin{align*}
 	\int_{B_{\theta_0\eps}(x_1)} \frac{(1-|u_\eps|^2)^2}{4 \eps^2} \geq
 	\left(\frac{7}{16}\right)^2 \frac{\theta_0^2 \eps^2 \pi}{4 \eps^2}= \left(\frac{7}{32}\right)^2 \theta_0^2 \pi
 \end{align*}
 which contradicts the assumption that
 \begin{align*}
 	\int_{B_{\theta_0\eps}(x_1)} \frac{(1-|u_\eps|^2)^2}{4 \eps^2} \leq 
 	\int_{B_{r_1}(0)} \frac{1}{2} |\nabla u_\eps |^2 + \frac{1}{4 \eps^2} (1-|u_\eps|^2)^2 \leq \varepsilon_0^2
 \end{align*}
 for a chosen sufficiently  small $\varepsilon_0 >0$.\\
\textit{Step 3:} We use $|u_\eps| \geq \frac12$ on $B_{r_1/2}$ to engage  the polar decomposition
 $$ u_\eps = |u_\eps| \frac{u_\eps}{|u_\eps|} =: \rho_\eps \psi_\eps.$$ 
Notice that $|\psi_\eps| \equiv 1$ as well as
 $$|\nabla \psi_\eps | + |\nabla \rho_\eps| \lesssim |\nabla u_\eps | \lesssim 
 |\nabla \psi_\eps | + |\nabla \rho_\eps|.$$
Multiplying \eqref{appr harm} by $\psi_\eps$  and applying the multiplication operator $\frac{1}{\rho_\eps} ((\cdot)  - ( \psi_\eps \cdot (\cdot) ) \psi_\eps )$
to \eqref{appr harm}, we obtain the system of equations
\begin{align}
 	& \Delta \rho_\eps + \frac1{\eps^2} \rho_\eps (1-\rho_\eps^2) - \rho_\eps 
 	|\nabla \psi_\eps |^2 =\tau_\eps \psi_\eps =:g_\eps \label{absoluteValueEquation} \\
 	& \Delta \psi_\eps = -  |\nabla \psi_\eps|^2 \psi_\eps - \frac{2}{\rho_\eps}\nabla \psi_\eps  \nabla \rho_\eps  
 	+ \frac{1}{\rho_\eps} (\tau_\eps - (\tau_\eps \psi_\eps) \psi_\eps)=:f_\eps
 \end{align}
on $B_{r_1/2}$ respectively. Considering the second equation, we again  employ estimates from the theory of elliptic equations 
\cite[Theorem 9.9]{gilbarg} by
$$ \no{\nabla^2 \psi_\eps}{L^\frac43} \lesssim \no{f_\eps}{L^\frac43} 
	\lesssim (\no{\nabla \psi_\eps}{L^2} + \no{\nabla \rho_\eps}{L^2}) 
	 \no{\nabla \psi_\eps}{L^4} + \no{\tau_\eps}{L^2} .
$$
Secondly the Sobolev imbedding gives $\no{\nabla \psi_\eps}{L^4} \lesssim \no{\nabla^2 \psi_\eps }{L^\frac43} +1$. 
We use the assumption $\no{\nabla u_\eps}{L^2}\leq \sqrt{2} \eps_0$ for small enough $\eps_0 >0$ to absorb the first term on the right-hand side of  the elliptic inequality and get
$$ \no{\nabla \psi_\eps}{L^4} \lesssim \no{\tau_\eps}{L^2}+1 .$$
Thus $(\nabla \psi_\eps)_\eps$ is uniformly bounded in $L^4(B_{r_1/2}) \cap W^{1,\frac43}(B_{r_1/2})$ and admits a strongly convergent subsequence in $W^{1,2}(B_{r_1/2})$. 

 Multiplying \eqref{absoluteValueEquation} by $1- \rho_\eps$ and integrating by parts over some $B_{r_2}$ with $0<r_2 \leq r_1/2$, we obtain
  \begin{align} \label{densityEstimate}
  \begin{split}
  	\int_{B_{r_2}} |\nabla \rho_\eps|^2 & + \int_{B_{r_2}} \frac{1}{\eps^2} (1- \rho_\eps^2) \rho_\eps (1- \rho_\eps) \\ 
  	& = \int_{\partial B_{r_2} } (1- \rho_\eps) \frac{\partial (1-\rho_\eps)}{\partial r} + \int_{B_{r_2}} \tau_\eps \psi_\eps (1- \rho_\eps) + \int_{B_{r_2}} \rho_\eps (1- \rho_\eps)|\nabla \psi_\eps|^2 \\
  	& \lesssim \int_{\partial B_{r_2}} (1- \rho_\eps) \left| \frac{\partial \rho_\eps}{\partial r} \right| + \left(\no{\tau_\eps}{L^2(B_{r_2})} + \no{\tau_\eps}{L^2(B_{r_2})}^2 +1 \right) \no{1-\rho_\eps}{L^2(B_{r_2})} .
  	\end{split}
  \end{align}
   By  Cavalieri's principle and the mean value theorem
  we see for  some $r_2 \in [r_1/4, r_1/2]$ that
  \begin{align*}
  \int_{\partial B_{r_2}} (1- \rho_\eps) \left| \frac{\partial \rho_\eps}{\partial r} \right| \leq \frac{C}{r_1} \int_{B_{r_2}} (1- \rho_\eps) \left| \frac{\partial \rho_\eps}{\partial r} \right|
  \end{align*}
  holds true. Going back to  inequality \eqref{densityEstimate}  we have
  \begin{align*}
  \int_{B_{r_2}} |\nabla \rho_\eps|^2 \lesssim (\no{\nabla \rho_\eps}{L^2(B_{r_2})} + 1) \no{1-\rho_\eps}{L^2(B_{r_2})} \lesssim \eps ,
  \end{align*}
  which implies $ \rho_\eps \to 1 $ strongly in $W^{1,2}(B_{r_1/4})$.  \\
\textit{Step 4:} Summarizing  the information gathered above, we have in particular
  \begin{align*}
  			\psi_\eps \to \psi \quad & \text{in } W^{1,2}(B_{r_1/4},\R^3), \\
  			\rho_\eps \to \rho\equiv 1  \quad &  \text{in } W^{1,2}(B_{r_1/4})
  \end{align*}
 as well as pointwise a.e. This eventually yields
 \begin{align*}
 					u_\eps = \rho_\eps \psi_\eps  \to \rho \psi =u   
 \end{align*}
in $L^2( B_{r_1/4} , \R^3)$ and since $\rho_\eps = |u_\eps|\leq 1$, we have
\begin{align*}
 					\nabla u_\eps = \psi_\eps \otimes \nabla \rho_\eps + \rho_\eps \nabla \psi_\eps \quad \to
 					\quad \psi \otimes \nabla \rho + \rho \nabla \psi = \nabla u 
 \end{align*}
 in $ L^2(B_{r_1/4},\R^{3 \times 2})$ due to the generalized dominated convergence theorem.
\end{proof}

\subsection{The concentration set $\Sigma_t$}

With respect to Theorem \ref{epsRegularity} we need to determine the properties of the set where strong convergence of $(u_{\eps_k})_k =( d_{\eps_k}(t))_k$ is available. As pointed out previously, strong convergence fails in  finitely many (isolated) points. We define the set of singular points at time $t \in (0,T]$ by 
\begin{align*}
\Sigma_{t} := \bigcap_{0 < r  } \left\{
x_0 \in \To: \liminf_{k \to \infty} \int_{B_r(x_0)} \frac12 |\nabla d_{\eps_k}(t)|^2 + \frac{(1-|d_{\eps_k}(t)|^2)^2}{4 \eps_k^2} > \varepsilon_0^2 \right\}
\end{align*}
where $\varepsilon_0$ is given  by Theorem \ref{epsRegularity}.
\begin{lemma} \label{singularSet}
There exists a constant $K=K(E_0)>0$ independent of $t$ such that   
\begin{align*}
	\# \Sigma_{t} \leq  K
\end{align*}
holds true.
\end{lemma}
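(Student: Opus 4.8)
The plan is to bound the cardinality of $\Sigma_t$ by a covering/Vitali argument combined with the uniform energy bound \eqref{energyBound}, which holds for $u_\eps = d_\eps(t)$ with constant $E_0$ independent of $t$ thanks to \eqref{energyLaw} and Lemma \ref{maximumPrinciple}. The idea is that every singular point carries a fixed quantum $\eps_0^2$ of energy in arbitrarily small balls, and a disjointness argument shows only finitely many such quanta fit into the total budget $E_0$.

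\textbf{Step 1: finiteness and a point count for fixed $r$.} First I would show that $\Sigma_t$ is contained, for every $r>0$, in the set
\begin{align*}
\Sigma_t^r := \left\{ x_0 \in \To : \liminf_{k\to\infty} \int_{B_r(x_0)} \tfrac12|\nabla d_{\eps_k}(t)|^2 + \tfrac{(1-|d_{\eps_k}(t)|^2)^2}{4\eps_k^2} > \eps_0^2 \right\}.
\end{align*}
Fix $r>0$ and suppose $x_1,\dots,x_N \in \Sigma_t^r$ are distinct points; since $\To$ is compact one may extract a maximal $r/2$-separated subfamily, so after relabelling assume $|x_i - x_j| \ge r/2$ for $i \ne j$ (and then every point of $\Sigma_t^r$ lies within $r/2$ of one of these, so bounding this separated family bounds a covering number of $\Sigma_t^r$). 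The balls $B_{r/4}(x_i)$ are pairwise disjoint. For each $i$, by definition of $\Sigma_t^r$ there is a subsequence along which the energy on $B_{r/4}(x_i)$ — here I use $B_{r/4}(x_i) \supset$ the relevant small scale, or more directly the definition with radius $r/4$ in place of $r$, noting $\Sigma_t \subset \Sigma_t^{r/4}$ — exceeds $\eps_0^2$; passing to a single further subsequence working simultaneously for all finitely many $i$, we get for $k$ large
\begin{align*}
\sum_{i=1}^N \int_{B_{r/4}(x_i)} \tfrac12|\nabla d_{\eps_k}(t)|^2 + \tfrac{(1-|d_{\eps_k}(t)|^2)^2}{4\eps_k^2} > N\, \tfrac{\eps_0^2}{2},
\end{align*}
say, absorbing the $\liminf$ into a strict lower bound $\eps_0^2/2$ for $k$ large. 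By disjointness the left side is at most the total energy $\int_\To(\cdots) \le E_0$ by \eqref{energyBound}. Hence $N \le 2E_0/\eps_0^2 =: K$.

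\textbf{Step 2: from $\Sigma_t^r$ to $\Sigma_t$.} Since $\Sigma_t = \bigcap_{r>0}\Sigma_t^r$ is a decreasing intersection and each $\Sigma_t^r$ is covered by $K$ balls of radius $r/2$ (equivalently, contains at most $K$ points that are $r/2$-separated), letting $r \to 0$ gives $\#\Sigma_t \le K$. More carefully: if $\Sigma_t$ had $K+1$ distinct points, pick $r$ smaller than the minimal pairwise distance among them; these $K+1$ points are then $r/2$-separated and all lie in $\Sigma_t^{r/4}$, contradicting Step 1. The constant $K = K(E_0)$ does not depend on $t$ because the energy bound $E_0$ is the same for all $t \in [0,T]$.

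\textbf{Main obstacle.} The routine part is the Vitali/disjointness packing; the subtle point is the interplay of \emph{several} $\liminf$'s over possibly different subsequences for the different candidate singular points. The resolution is standard: there are only finitely many points in play at once (at most $K+1$ if we argue by contradiction), so one may diagonalize to a common subsequence along which all the local energies are simultaneously bounded below, and only then invoke disjointness and the global bound. One should also be slightly careful that the definition of $\Sigma_t$ uses balls $B_r(x_0)$ whereas the disjoint balls in the packing have radius $r/4$; this is harmless since $x_0 \in \Sigma_t$ implies the $\liminf$ over any fixed small ball around $x_0$ exceeds $\eps_0^2$, and monotonicity of the integral in the radius lets us freely shrink to $r/4$.
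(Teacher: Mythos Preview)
Your proof is correct and follows essentially the same approach as the paper: pick finitely many candidate points of $\Sigma_t$, choose small pairwise disjoint balls around them, and compare the summed local energies against the global budget $E_0$. The paper's version is a bit cleaner because it skips the auxiliary sets $\Sigma_t^r$ and the subsequence diagonalization---since $\liminf_{k}\int_{B_{r_l}(x_l)}(\cdots)>\varepsilon_0^2$ already implies the integral exceeds $\varepsilon_0^2$ for \emph{all} sufficiently large $k$ (not merely along a subsequence), a single $k_0$ works for all $N$ balls and one gets $N\le E_0/\varepsilon_0^2$ directly, without the factor $2$.
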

\begin{proof}
Choose a finite subset $A_N:=\{ x_l \}_{1\leq l \leq N} \subset \Sigma_{t} $ for $N \in \N$ with mutually disjoint balls $\{ B_{r_l}(x_l) \}_l$. Since $A_N$ is finite, there is a $k_0 \in \N$ such that
\begin{align*}
 \varepsilon_0^2 < \int_{B_{r_l}(x_l)}\frac12 |\nabla d_{\eps_k}|^2 + \frac{(1-|d_{\eps_k}|^2)^2}{4 \eps_k^2} 
\end{align*}
for all $k \geq k_0$ by construction of $\Sigma_{t}$. Thus we have
\begin{align*}
\# A_N = N  < \frac{1}{\varepsilon_0^2} \sum_{l=1}^N\int_{B_{r_l}(x_l)}
 \frac12 |\nabla d_{\eps_k}|^2 + \frac{(1-|d_{\eps_k}|^2)^2}{4 \eps_k^2} \leq \frac{E_0}{\varepsilon_0^2}.
\end{align*}
due to the energy estimate \eqref{energyLaw}.
By the  arbitrariness of $A_N $,  the set $\Sigma_{t} $  consists of at most $K:=\left\lceil \frac{E_0}{\varepsilon_0^2}\right\rceil$ points.
\end{proof}
As a consequence of the previous lemma,  we  find a subsequence of $(d_{\eps_k}(t))_k$ strongly converging on $\To\backslash \Sigma_t$ for every $t$ under consideration.
\begin{lemma} \label{localConvergence}
For $t \in (0,T]$ let $(u_{\eps_k})_k = ( d_{\eps_k}(t))_k$. Then there exists a subsequence\footnote{not relabeled} such that
$$ \nabla d_{\eps_k}(t) \to \nabla d(t)$$
in $L^2_{\loc}(\To \backslash \Sigma_{t},\R^{3 \times 2})$.
\end{lemma}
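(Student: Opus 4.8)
The plan is to combine the $\eps_0$-regularity Theorem \ref{epsRegularity} with the finiteness statement Lemma \ref{singularSet} and a diagonal extraction argument. First I would fix the time $t \in (0,T]$ under consideration (that is, one of the full-measure set of times for which \eqref{conv9} holds) and set $u_{\eps_k} = d_{\eps_k}(t)$. By Lemma \ref{singularSet}, $\Sigma_t = \{x_1,\dots,x_M\}$ with $M \le K$. The key point is that for any $x_0 \in \To \setminus \Sigma_t$, the definition of $\Sigma_t$ as an intersection over $r>0$ of the ``bad'' sets means that $x_0$ fails to lie in at least one of these sets: there exists $r_1 = r_1(x_0) > 0$ with
\begin{align*}
\liminf_{k\to\infty} \int_{B_{r_1}(x_0)} \tfrac12 |\nabla d_{\eps_k}(t)|^2 + \tfrac{(1-|d_{\eps_k}(t)|^2)^2}{4\eps_k^2} \le \varepsilon_0^2.
\end{align*}
Passing to a subsequence (depending on $x_0$) along which the $\liminf$ is attained, we get the hypothesis of Theorem \ref{epsRegularity} — possibly after replacing $\varepsilon_0^2$ by $2\varepsilon_0^2$ or shrinking $r_1$ slightly so that the $\sup$ over $\eps$, rather than just the $\liminf$, is controlled; this is the one technical wrinkle and I address it below. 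Theorem \ref{epsRegularity} then furnishes a further subsequence with $u_{\eps_k} \to u$ strongly in $W^{1,2}(B_{r_1/4}(x_0),\R^3)$, and the limit must be $\nabla d(t)$ by uniqueness of weak limits together with \eqref{conv6} and \eqref{conv9}.

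Next I would upgrade this local, subsequence-dependent statement to a single subsequence that works simultaneously on all of $\To \setminus \Sigma_t$. Since $\To \setminus \Sigma_t$ is an open subset of a compact space minus finitely many points, it is covered by countably many balls $B_{r_1(x_0)/4}(x_0)$ from the previous step; by Lindel\"of (or simply because $\R^2$ is second countable) we extract a countable subcover $\{B_j\}_{j\in\N}$. A standard diagonal argument over $j$ then produces one subsequence, not relabeled, along which $\nabla d_{\eps_k}(t) \to \nabla d(t)$ strongly in $L^2(B_j)$ for every $j$. Given any compact $L \Subset \To \setminus \Sigma_t$, finitely many of the $B_j$ cover $L$, so $\nabla d_{\eps_k}(t) \to \nabla d(t)$ in $L^2(L,\R^{3\times2})$, which is exactly convergence in $L^2_{\loc}(\To\setminus\Sigma_t)$.

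The main obstacle is the mismatch between the $\liminf$ appearing in the definition of $\Sigma_t$ and the $\sup_{0<\eps\le1}$ required in the hypothesis of Theorem \ref{epsRegularity}. To bridge it: given $x_0 \notin \Sigma_t$, pick $r_1$ with the $\liminf$ bound above; then for $k$ large (say $k \ge k_0$) the energy of $d_{\eps_k}(t)$ on $B_{r_1}(x_0)$ stays below, say, $2\varepsilon_0^2$, and the finitely many remaining indices $k < k_0$ can be handled either by shrinking $r_1$ further (using absolute continuity of the integral, since each $d_{\eps_k}(t) \in W^{2,2}$) or simply by running Theorem \ref{epsRegularity} on the tail subsequence $(\eps_k)_{k\ge k_0}$ only — the conclusion of that theorem is a subsequential statement anyway, so discarding finitely many indices is harmless. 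One should also note that $\varepsilon_0$ in Theorem \ref{epsRegularity} can be chosen uniformly, so replacing $\varepsilon_0$ by $\varepsilon_0/\sqrt2$ throughout (in both the definition of $\Sigma_t$ and the regularity theorem) makes the constants line up cleanly; I would remark that this only changes the constant $K$ in Lemma \ref{singularSet} and causes no circularity. Beyond this bookkeeping the argument is routine: the real content has already been extracted in Theorem \ref{epsRegularity} and Lemma \ref{singularSet}.
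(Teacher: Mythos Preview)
Your proposal is correct and follows essentially the same route as the paper: cover $\To\setminus\Sigma_t$ by countably many balls on which Theorem~\ref{epsRegularity} applies, then run a diagonal extraction. The paper obtains the countable cover by taking rational centres and an explicit radius $\tfrac{4}{5}r_j$ (with $r_j$ the supremal ``good'' radius), whereas you invoke Lindel\"of; your treatment of the $\liminf$-versus-$\sup$ mismatch (pass to the tail $k\ge k_0$, or shrink $r_1$ by absolute continuity for the finitely many early indices) is in fact more explicit than the paper's.
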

\begin{proof}
Let  $(z_j)_{j \in \N}$ be the set of rational points on $  \To \backslash \Sigma_{t} $
 and define
\begin{equation} r_j:= \sup \left\{ r>0: \liminf_{k \to \infty} \int_{B_r(z_j)} \frac12 |\nabla d_{\eps_k}(t)|^2 + 
	\frac{(1-|d_{\eps_k}(t)|^2)^2}{4 \eps_k^2} \leq \varepsilon_0^2 \right\}. \label{supRadii}
	\end{equation}
In general the radii $r_j$ might be  too large to satisfy
$$\int_{B_{r_j}(z_j)} \frac12 |\nabla d_{\eps_k}(t)|^2 + 
	\frac{(1-|d_{\eps_k}(t)|^2)^2}{4 \eps_k^2} \leq \varepsilon_0^2,$$
 which is why we consider  $\frac45 r_j$. In view of Theorem \ref{epsRegularity} we want to prove  $ \bigcup_{j \in \N}  B_{r_j/5}(z_j) = \To \backslash \Sigma_{t}$.\\
Let $z \in \To \backslash \Sigma_t$ with $r_z>0$  be such
that 
$$ \liminf_{k \to \infty} \int_{B_{r_z}(z)} \frac12 |\nabla d_{\eps_k}(t)|^2 + 
	\frac{(1-|d_{\eps_k}(t)|^2)^2}{4 \eps_k^2} \leq \varepsilon_0^2.$$
By density, we choose a $z_{j_0}$ such that $|z_{j_0}-z|< \frac{r_z}{6}$. Thus we have $r_{j_0} \geq \frac56 r_z$ from
\eqref{supRadii} and therefore $|z_{j_0}-z| < \frac{r_{j_0}}{5}$.

Since the covering is countable we use a diagonal argument, Theorem \ref{epsRegularity} and Lemma 3.2 to extract a subsequence which fulfills the assertion.
\end{proof}

\subsection{Limit passage  $\eps \to 0^+$}
 Finally, exploiting the results from the previous section, we want to conclude the desired limit passage as $
\eps \to 0^+$. As mentioned in the introduction, the idea is to look at fixed times $t$ and use concentration-cancellation arguments from \cite{diperna} for the stress tensor term $\div (\nabla d \odot \nabla d)$ in the momentum equation. 

The inspiration for this argument is taken from  \cite{harpes}, which  in turn relies on \cite{struwe}. Well-known arguments from  \cite{chen2} (see also \cite{alouges}) then allow to pass from \eqref{approx2} to \eqref{equationLLG}.

\begin{proof}[Proof of Theorem 2.1] According to Section 3.1, we take a subsequence\footnote{not relabeled} of solutions to \eqref{approx1}--\eqref{approx2}
$(v_\eps, d_\eps)_\eps$ with $\eps \to 0^+$ such that \eqref{conv1}--\eqref{conv9} holds true. 

To begin with, we consider the weak formulation of \eqref{approx2}. Note that for $\xi \in W^{1,2}(\To \times [0,T], \R^3) \cap L^\infty( \To \times [0,T] , \R^3)$ we have that $ d_\eps \wedge \xi$ is a proper test function. The identity 
$d_\eps \wedge \Delta d_\eps = \div ( d_\eps \wedge \nabla d_\eps)$ then yields
\begin{align*}
\int_0^T  \int_\To \big( d_\eps \wedge ( \partial_t d_\eps + (v_\eps \cdot \nabla) d_\eps)    \big) \cdot \xi + 
\int_0^T \int_\To  ( d_\eps \wedge  \nabla d_\eps ) : \nabla \xi =0,
\end{align*}
where  we wrote
$$( a \wedge \nabla b ) : \nabla c = \sum_j  (a \wedge \partial_{x_j} b ) \cdot \partial_{x_j} c$$
for $a,b,c : \To \to \R^3$ being weakly differentiable. From the convergence statements \eqref{conv1}, \eqref{conv4}-- \eqref{conv7} and the bound from the maximum principle $|d_\eps |\leq 1$ a.e.\ for all $\eps >0$, we conclude that the limit of the weak formulation is
\begin{align*}
\int_0^T  \int_\To \big( d \wedge ( \partial_t d + (v \cdot \nabla) d)    \big) \cdot \xi + 
\int_0^T \int_\To  ( d \wedge  \nabla d ) : \nabla \xi =0.
\end{align*}
Here we have $|d|\equiv 1$ a.e.\ therefore all derivatives (in particular the first term involving $\partial_t d$ and $\partial_{x_j}  d$ for $j=1,2$) are perpendicular to $d$ a.e. Using this fact and setting  $\xi = d \wedge \Phi$ with $\Phi \in C^\infty( \To \times [0,T],\R^3)$, we obtain
\begin{align*}
\int_0^T  \int_\To \left( \partial_t d + (v \cdot \nabla) d \right) \cdot \Phi + 
\int_0^T \int_\To   \nabla d  : \nabla \Phi - \int_0^T \int_\To |\nabla d|^2 d \cdot \Phi =0,
\end{align*}
employing the Lagrange identity $ (a \wedge b)\cdot  (c \wedge d) = (a\cdot c) (b\cdot d) - (b\cdot c) (a \cdot d)$ for the wedge-product. This shows that the limits $(v,d)$ satisfy the director equation \eqref{equationLLG}. 

The remaining part is to show that $(v,d)$ fulfill the momentum equation. As $
\nabla d_\eps(t) \weak \nabla d(t)$, Theorem \ref{epsRegularity} and Proposition \ref{singularSet} show that no oscillations occur in the limit and the set of concentrations is finite at least for a subsequence.
 \\
Recall from \eqref{conv8} and \eqref{conv9} that $(v_\eps,d_\eps)(\cdot , t)$ strongly converges to the limit for a.e. $t \in [0,T]$. We set $\tau_\eps := \partial_t d_\eps + (v_\eps \cdot \nabla) d_\eps$ and $\tau := \partial_t d + (v \cdot \nabla)d$. Due to the a-priori bounds from Section 3.1, the set  
$$ A :=\left\{ t \in [0,T]: \liminf_{\eps \to 0^+} \left( \no{\partial_t v_\eps(t)}{X_r^*} + \no{ \nabla v_\eps(t)}{L^2} + \no{\nabla d_\eps(t)}{L^2} + \no{ \tau_\eps(t) }{L^2} \right)< \infty \right\}$$
has full measure by Fatou's lemma, i.e.\ $|A|=T$. Without loss of generality, let $A$ be such that $(v_\eps,d_\eps)(t) \to (v,d)(t)$ as in \eqref{conv8} and \eqref{conv9} for every $t \in A$. Fix $t \in A$. Now there exists a subsequence for which
\begin{equation*}
\left( \partial_t v_{\eps_j}, \nabla v_{\eps_j}, \nabla d_{\eps_j} , \tau_{\eps_j} \right)(t)
~ \weak ~ \left(\partial_t v, \nabla v , \nabla d , \tau \right)(t),
\end{equation*} 
where we identified the limit in $t$ by the strong convergence of $(v_{\eps_j}(t),d_{\eps_j}(t))_{j \in \N}$ in $L^2$. Since this is true for
any subsequence, the full sequence $\left( ( \partial_t v_\eps , \nabla v_\eps  , \nabla d_\eps, \tau_\eps )(t)\right)_\eps$ converges weakly.

Next, we take a  test function $\phi \in C_{\div}^\infty(\To,\R^2)$. Since $\phi$ is solenoidal, it is
 $$ \phi = \nabla^\perp \eta + \text{const.}= (  - \partial_2 , \partial_1 )^\top \eta + \text{const.}$$
 for some $\eta \in C^\infty(\To)$ (see Section 2) and the constant vanishes if $ \int_\To \phi =0$. Testing the momentum equation \eqref{approx1} at time $t\in A \backslash \{0\}$ by $\phi$ we obtain
\begin{equation}	\label{weakInSpace}
\begin{split}
\int_\To \partial_t v_\eps (t) \cdot \phi + \int_\To v_\eps(t) \otimes v_\eps(t) : \nabla \phi  + 
\int_\To \nabla v_\eps(t) : \nabla \phi \\
- \int_\To \nabla d_\eps(t) \odot \nabla d_\eps(t) : 
\begin{pmatrix} - \partial_1 \partial_2 \eta & -\partial_2^2 \eta \\ \partial_1^2 \eta & \partial_1 \partial_2 \eta \end{pmatrix}
=0.
\end{split}
\end{equation}
By Lemma \ref{localConvergence}, there exists a subsequence\footnote{not relabeled} $(v_\eps, d_{\eps})_\eps$, which in general depends on $t$, such that 
$$ \nabla d_{\eps}(t) \to \nabla d(t)$$
in $L^2_{\loc}(\To \backslash \Sigma_{t},\R^{3 \times 2})$, where $\Sigma_{t}$ is finite according to Lemma 3.2. 

By density, it suffices to show the weak formulation \eqref{weakInSpace} in the limit for all functions $\eta(x) = e^{ik\cdot x}$ with 
$k \in \dot{\Z}^2$. First note that the only problematic terms are the ones related to $\partial_t v$ and $\nabla d \odot \nabla d$.  However, choosing a smooth cut--off function $\psi$ which vanishes in a neighborhood of $\Sigma_{t}$, we may pass to the limit with the test function $\nabla^\perp \eta(x)= \nabla^\perp \left( e^{ik\cdot x} \psi(x) \right)$, i.e.
\begin{equation}	\label{weakWithHoles}
\begin{split}
 \la \partial_t  v (t) , \nabla^\perp \eta \ra_{X_r^*,X_r}  & + \int_\To v(t) \otimes v(t) : \nabla \nabla^\perp \eta  + 
\int_\To \nabla v(t) : \nabla \nabla^\perp \eta  \\
&- \int_\To \nabla d(t) \odot \nabla d(t) : \nabla \nabla^\perp \eta
=0.
\end{split}
\end{equation}
It remains  to "fill" the holes and we do so by considering every point in $\Sigma_{t}$ separately.
To this end,  observe  
 that the equations \eqref{equationNS} and \eqref{approx1} are covariant under rotations. To be more specific, let $0 = x_0 \in \Sigma_t$, without loss of generality, be the  invariant point of the rotation. Taking a test function $\nabla^\perp \beta$ with $\operatorname{supp}\beta \subset B_r$, $r>0$, the rotation of coordinates 
$Qy=x$ for $Q^\top =Q^{-1}$ yields 
\begin{equation} \label{trafo}
 \int_{B_r} (\nabla d_\eps \odot \nabla d_\eps)(x,t) : \nabla \nabla^\perp \beta(x,t)
 \, \d x = \int_{B_r} (\nabla_y d_\eps \odot \nabla_y d_\eps)(Qy,t) : \nabla_y \nabla_y^\perp
 \beta(Qy,t) \, \d y
\end{equation}
by a change of variables. Similar identities hold for all other terms, i.e. 
$$ \int f_n (x) \phi (x) \, \d x \to \int f(x) \phi (x) \, \d x \qquad \text{iff} \qquad \int f_n (Qy) \phi (Qy) \, \d y \to \int f(Qy) \phi (Qy) \, \d y.$$
For $r>0$ sufficiently small we know by  Lemma \ref{localConvergence} that $(\nabla d_\eps \odot \nabla d_\eps)(\cdot ,t)$ only concentrates in $x_0=0 \in B_r$ and so does $(\nabla_y d_\eps \odot \nabla_y d_\eps)(Q(\cdot),t)$ by the same token. We imitate the key idea of \cite{diperna}. Because of the rotational covariance, it is enough to consider test functions $h(x)=h(x_1)$ in a neighborhood of the concentration point $x_0=0$. 
To cut off the concentration point, define $h_n$ for $n\in \N$ large enough by the elliptic  ODE
$$ h_n'' =(1-  \mathds{1}_{(-1/n,1/n)} ) h'', \qquad h_n(-r) = h(-r),~ h_n(r) = h(r).$$
We properly localize the function $h_n$ by considering $\eta_n(x_1,x_2)=h_n(x_1) \chi(x_1,x_2)$ with $\chi$ being smooth, $\chi\equiv 1$ on $B_{r/2}$ and zero outside of $B_r$. We set $\eta = h \chi$ respectively and note that
\begin{align*}
\nabla^2 \eta_n = 	\nabla^2 (h_n \cdot \chi) \to  \nabla^2 (h \cdot  \chi)= \nabla^2 \eta    \qquad \text{almost everywhere on }\To
\end{align*} 
and by dominated convergence in any $L^p(\To)$, $1\leq p<\infty$; therefore $\eta_n \to \eta $ in $W^{2,p}(\To)$  (in particular $\nabla^\perp  \eta_n\to \nabla^\perp \eta$ in $X_s=W_\div^ {1,s}(\To,\R^2)$ for $2<s<\infty$).

Choosing $\phi= \nabla^\perp \eta_n$ in \eqref{weakInSpace}, we are able to pass to the limit in $\eps$ since $\nabla \nabla^\perp \eta_n$ vanishes around the concentration point. The limit then reads
\begin{equation} \label{nEquation}
\begin{split}
 & \la \partial_t  v (t) , \nabla^\perp \eta_n \ra_{X_r^*, X_r}   + \int_{B_r} v(t) \otimes v(t) : \nabla \nabla^\perp \eta_n  + 
\int_{B_r} \nabla v(t) : \nabla \nabla^\perp \eta_n  \\
&-\int_{B_r \backslash B_{r/2}} \nabla d(t) \odot \nabla d(t) : \nabla \nabla^\perp \eta_n
- \int_{B_{r/2}} [\nabla d \odot \nabla d]_{2,1} h_n''
=0.
\end{split}
\end{equation}
with $[A]_{ij}=a_{ij}$ for  $A=(a_{ij})_{ij}\in \R^{M \times N}$.
As $n \to \infty$ we are able to replace $\eta_n$ by $\eta$ in the second, third and fourth term due to $v\otimes v \in L^2(\To,\R^2), \nabla v \in L^2(\To, \R^{2 \times 2}) , \nabla d \odot \nabla d \in L^1(\To\backslash B_{r/2},\R^{2 \times 2})$ from \eqref{conv1}--\eqref{conv6}. For the first term we have
$$  \la \partial_t  v (t) , \nabla^\perp \eta_n \ra_{X_s^*,X_s} \quad \to \quad  \la \partial_t  v (t) , \nabla^\perp \eta \ra_{X_s^*,X_s}$$
since $\partial_t v(t) \in X_s^*$ and $ \nabla^\perp \eta_n \to \nabla^\perp \eta $ in $X_s$. In order to use Lebesgue's dominated convergence theorem for the last term of \eqref{nEquation}, we observe that
\begin{align*}
[\nabla d \odot \nabla d]_{2,1} h_n'' \quad &\to \quad  [\nabla d \odot \nabla d]_{2,1} h'' \quad  \text{a.e. } \\
 \big|[\nabla d \odot \nabla d]_{2,1} h_n''\big| \quad &\leq \quad \big|[\nabla d \odot \nabla d]_{2,1} h''\big| \quad \in L^1(B_{r/2})
\end{align*}
is valid.  This and $\nabla \nabla^\perp \eta=h''$ on $B_{r/2}$ yield \eqref{weakWithHoles} for $\eta = h \chi$. \\
 By \eqref{trafo}, we deduce that the weak formulation is also satisfied for test functions of the form $\nabla^\perp \eta(x) =\nabla^\perp \left( e^{ik\cdot x} \chi(x)\right)$, $k \in \dot{\Z}^2$, where $\chi$ is a proper chosen cut-off function around a concentration point. Combining this with \eqref{weakWithHoles} and using the density ($W^{3,2}(\To)$ is enough) of $\{e^{ik\cdot(\cdot )}: k \in \Z^2\}$ in the space of test functions, we  eventually obtain that $(v,d)$ satisfy the weak formulation 
\begin{equation}	\label{weakSpace}
\begin{split}
 \la \partial_t  v (t) , \phi \ra_{X_r^*,X_r}  & + \int_\To v(t) \otimes v(t) : \nabla \phi  + 
\int_\To \nabla v(t) : \nabla  \phi  \\
&- \int_\To \nabla d(t) \odot \nabla d(t) : \nabla \phi
=0
\end{split}
\end{equation}
for every $\phi \in C_{\div}^\infty(\To,\R^2)$ and  $t \in A$.  As $t$ was arbitrary and $A$ has full measure, \eqref{weakSpace} holds for a.a. $t \in (0,T]$.\\
In order to deal with the time dependence we multiply \eqref{weakSpace} by $\zeta(t)$ with $\zeta \in C^\infty([0,T])$ with $\zeta(T)=0$ and integrate over $[0,T]$. 
The density of $C^\infty_\div(\To,\R^2) \otimes C^\infty([0,T])$ in $C^\infty_\div(\To \times [0,T],\R^2)$  yields
\begin{align*}
\int_0^T \la \partial_t v , \phi \ra_{X_r^*,X_r} &+ \int_0^T \int_\To v \otimes v : \nabla \phi  + 
\int_0^T \int_\To \nabla v : \nabla \phi \\
&-\int_0^T \int_\To \nabla d \odot \nabla d : \nabla \phi 
=0
\end{align*}
for all $\phi \in C_{\div}^\infty(\To \times [0,T],\R^2)$ and $\phi(T)=0$. Although the regularity of 
$\partial_t v$ is too weak to use an integration by parts formula, we know from 
$$ \int_0^T \int_\To \partial_t v_\eps \cdot  \phi =  -\int_\To v_0 \cdot  \phi(0) - 
\int_0^T \int_\To v_\eps \cdot  \partial_t \phi$$
for every $\eps >0$  that we have
 $$ \int_0^T  \la \partial_t v, \phi \ra_{X_r^*,X_r} =  -\int_\To v_0 \cdot \phi(0) - 
\int_0^T \int_\To v  \cdot \partial_t \phi.$$
according to \eqref{conv1} and \eqref{conv3}. \\
From \eqref{conv1}--\eqref{conv7} we gain an improvement of regularity, i.e.\ $(v,\nabla d) \in C_w ([0,T]; L^2(\To, \R^2 \times \R^{3 \times 2} ) )$.
In particular, the solution $(v,d)$ attains the  initial data $(v_0,d_0)$.
Hence  \eqref{equationNS}--\eqref{ini4} possesses a weak solution in the sense of Definition 2.1. 
 \end{proof}
 \begin{proof}[Proof of Theorem 2.2]
 The existence of a weak solution follows from Theorem 2.1. The energy inequality 
 follows from \eqref{energyLaw} and \eqref{conv1}--\eqref{conv9} as well as the lower semicontinuity of the norms with respect to  weak convergence.
\end{proof}

\bigskip

\noindent{\bf Acknowledgments:}  
The author would like to thank his supervisor Anja Schl\"omerkemper  for giving the opportunity of working on this  topic and for her support in writing this article. Also, the author would  like to thank Francesco De Anna for fruitful discussions on this topic, Martin Kalousek for reading an earlier version of the manuscript and Jesse Ratzkin for pointing out a mistake in an earlier version.

\end{document}